\font\smallit=cmti10
\font\smalltt=cmtt10
\renewcommand\section{\@startsection {section}{1}{\z@}%
                                   {-3.5ex \@plus -1ex \@minus -.2ex}%
                                   {2.3ex \@plus.2ex}%
                                   {\normalfont\normalsize\bfseries\boldmath}}
\renewcommand\subsection{\@startsection{subsection}{2}{\z@}%
                                     {-3.25ex\@plus -1ex \@minus -.2ex}%
                                     {1.5ex \@plus .2ex}%
                                     {\normalfont\normalsize\bfseries\boldmath}}
\renewcommand{\@seccntformat}[1]{\csname the#1\endcsname. }
\theoremstyle{plain}
\newtheorem{theorem}{Theorem}
\newtheorem{lemma}{Lemma}
\newtheorem{corollary}{Corollary}
\theoremstyle{definition}
\newtheorem{definition}{Definition}
\theoremstyle{remark}
\newtheorem{remark}{Remark}
\newcommand{\refk}[1]{(\ref{#1})} 
\begin{document}

\begin{center}
\uppercase{\bf \boldmath Ternary Digits of Powers of Two}

{\bf Xuyi Ren\footnote{Undergraduate author}} \\
{\smallit Department of Mathematics, Grinnell College, Grinnell, Iowa, USA} \\
{\tt renxuyi@grinnell.edu} \\ 
\vskip 20pt
{\bf Christian Roettger} \\
{\smallit Department of Mathematics, Iowa State University, Iowa, USA} \\
{\tt roettger@iastate.edu} \\ 
\vskip 10pt
\end{center}


\centerline{\bf Abstract}

The \textit{ternary digits of $2^n$} are a finite sequence of 0s, 1s, and 2s. 
It is a natural question to ask whether the frequency of any string of 0s, 1s, and 2s 
in this sequence approaches the same limit for all strings of the same length, as the exponent $n$ approaches infinity (\textit{Uniform Distribution in the limit}).

Currently the answer to this question is unknown. Even a much weaker conjecture by Erd\"os is still open. But we present computational results (up to $n = 10^6$) supporting uniform distribution in the limit. 

In this context, we discuss implications of Benford's Law and a special case of Baker's Theorem. 

Then we investigate the infinite sequence of ternary digits of $\log_3(2)$. There are analogous questions about the distribution of strings of 0s, 1s, and 2s in that sequence. If there is uniform distribution in the limit, then $\log_3(2)$ is called \textit{normal to base 3}. 

In the absence of definitive results, we can offer again computational evidence from the first $10^6$ ternary digits of $\log_3(2)$, strongly supporting the conjecture that $\log_3(2)$ is normal to base 3. 

\noindent

\pagestyle{myheadings}
\markright{\smalltt Ternary Digits 11/2025)\hfill} 
\thispagestyle{empty}
\baselineskip=12.875pt
\vskip 30pt


\section{Frequency of Ternary Digits of Powers of Two} 

Representing powers of 2 in base 3 means writing 
\[  2^n = \sum_{i=0}^{k} c_i 3^i
\]
with a finite sequence of \textit{ternary digits} $c_i=0,1,2$ and the \textit{leading digit} $c_k\neq0$. 

Surprisingly little is known about the asymptotic behavior of the frequency of 0s, or 1s, or 2s in this sequence as $n$ tends to infinity. 

\newcommand{\dcount}{c_d(n)} 
\newcommand{\Dcount}{C_d(N)} 
\newcommand{\len}{l(n)} 
\newcommand{\Len}{L(n)} 
\newcommand{\freq}{f_d(n)} 
\newcommand{\Freq}{F_d(N)} 
\newcommand{\dcounthigh}{c_d(n,H)} 
\newcommand{\Dcounthigh}{C_d(N,H)} 
\newcommand{\Freqhigh}{F_d(N,H)} 
\newcommand{\scount}{c_s(n)} 
\newcommand{\Scount}{C_s(N)} 
\newcommand{\SFreq}{F_s(N)} 
\newcommand{\hcount}{c_d(n,H)} 
\newcommand{\limHFreq}{L_{d,H}} 
\newcommand{\phid}[2]{\phi_{#1}(#2)}

Let us write $\lceil x\rceil$ for the smallest integer greater or equal to $x$, and 
$\alpha=\log_3 2$. Then $\len=\lceil n\alpha\rceil$ is the number of ternary digits of $2^n$.
For $d=0,1,2$ define $\dcount$ to be the \textit{count of ternary digits equal to $d$ in $2^n$}, and the \textit{frequency of $d$} by
\begin{eqnarray*}
    \freq &=& \frac{\dcount}{\len}\\
\end{eqnarray*}

\medskip
\noindent\textit{General counting function.}
For any integer $A\ge1$ and $d\in\{0,1,2\}$, let $\phi_d(A)$ denote the number of ternary digits of $A$ that are equal to $d$.
For powers of two we keep the shorthand $c_d(n):=\phi_d(2^n)$.
When a statement applies to arbitrary integers (e.g., Theorem~2), we will write $\phi_d(A)$; for powers of two we use $c_d(n)$.
\medskip

We are now ready to state several conjectures, from strongest to weakest, about how close the distribution of frequencies $\freq$ comes to being uniform, as $n$ grows to infinity. 

\begin{enumerate}
\item[C1]
For $d=0, 1, 2$, the frequency $\freq$ of ternary digits equal to $d$ has limit $1/3$ as $n$ goes to infinity (\textit{uniform distribution in the limit}). 
\item[C2]
For $d=0, 1, 2$, the frequency $\freq$ has a nonzero limit as $n$ goes to infinity. 
\item[C3]
For $d=0,1,2$, the frequency $\freq$ has a nonzero lower bound valid for large $n$.
\item[C4]
 (Erd\"os) Every power $2^n$ with $n > 8$ has at least one ternary digit equal to 2. 
\end{enumerate}

These conjectures are meant to illustrate the gulf between what seems plausibly true and what is known. 
In the words of Terry Tao, even conjecture C4 is 'still a fair distance beyond what one can do with current technology'
\cite{Tao2011_2}. See Lagarias \cite{Lagarias} for some results concerning this conjecture. 

In Section 2, we start by considering the distribution of \textit{aggregate frequencies} $\Freq$, defined using aggregate count $\Dcount$ and total number of digits $\Len$, 
\begin{eqnarray*}
    \Dcount &=&\sum_{n=1}^N \dcount\\
    \Len &=& \sum_{n=1}^N \len\\
    \Freq &=& \frac{\Dcount}{\Len}
\end{eqnarray*}
We can show that conjecture C1 would imply for all $d=0,1,2$
\begin{equation}\label{equation:AGGFREQ}
    \lim_{N\to\infty} \Freq = \frac{1}{3}
\end{equation}
So Equation \refk{equation:AGGFREQ} can also be considered to be a weaker conjecture than C1.

\begin{lemma}
If conjecture C1 is true, then Equation~\refk{equation:AGGFREQ}.
holds.
\end{lemma}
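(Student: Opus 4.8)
The plan is to derive the aggregate limit directly from the pointwise limit in C1 via a Cesàro-type averaging argument. Recall that $\Freq = \Dcount/\Len$ where $\Dcount = \sum_{n=1}^N \dcount$ and $\Len = \sum_{n=1}^N \len$, and that $\len = \lceil n\alpha\rceil$ with $\alpha = \log_3 2$. Writing $\Freq$ as a weighted average of the pointwise frequencies,
\[
  \Freq \;=\; \frac{\sum_{n=1}^N \freq\,\len}{\sum_{n=1}^N \len},
\]
makes clear that $\Freq$ is a convex combination of the numbers $\freq$, $1\le n\le N$, with weights $\len/\Len$. So the statement is essentially the classical fact that if a sequence $a_n \to L$, then any weighted average $\sum w_n a_n / \sum w_n$ with positive weights also tends to $L$, provided the weights do not concentrate too badly — here the weights $\len \sim n\alpha$ grow only linearly, which is harmless.

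First I would make the weight growth explicit: since $n\alpha \le \len < n\alpha + 1$, we have $\Len = \sum_{n=1}^N \len = \tfrac{\alpha}{2}N^2 + O(N)$, so in particular $\Len \to \infty$ and $\len/\Len \to 0$ uniformly for $n$ in any bounded range. Next, fix $\varepsilon > 0$. By C1 there is an $n_0$ such that $|\freq - \tfrac13| < \varepsilon$ for all $n \ge n_0$. Split the sum defining $\Dcount - \tfrac13\Len = \sum_{n=1}^N (\freq - \tfrac13)\len$ into the range $n < n_0$ and the range $n_0 \le n \le N$. The first part is bounded by a constant (depending on $n_0$ but not on $N$), since each $|\freq - \tfrac13|\len \le \len \le n_0\alpha + 1$; the second part is bounded in absolute value by $\varepsilon \sum_{n=n_0}^N \len \le \varepsilon\,\Len$. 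Dividing by $\Len$ and letting $N\to\infty$ gives $\limsup_{N\to\infty} |\Freq - \tfrac13| \le \varepsilon$, and since $\varepsilon$ was arbitrary, $\Freq \to \tfrac13$.

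There is no real obstacle here; the only point requiring a line of care is confirming $\Len\to\infty$ so that the finite head of the sum washes out, which is immediate from $\len \ge n\alpha$ with $\alpha > 0$. I would also remark that the same argument shows the converse implication fails in general (aggregate convergence does not force pointwise convergence), which is why Equation~\refk{equation:AGGFREQ} is genuinely weaker than C1, as asserted in the text.
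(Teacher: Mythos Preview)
Your proof is correct and follows the same approach as the paper: both recognize that $\Freq$ is a weighted average of the $\freq$ with weights $\len/\Len$ and invoke the Ces\`aro-type fact that such averages inherit the limit. The paper simply asserts this in one line, whereas you spell out the $\varepsilon$--$n_0$ splitting and the growth of $\Len$; your added detail is sound and makes explicit the one nontrivial check (that $\Len\to\infty$) the paper leaves implicit.
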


\begin{proof}
By definition, $F_d(N)$ is a weighted average of the frequencies $f_d(n)$ with nonnegative weights $\ell(n)/\Len$. If each $f_d(n)$ converges to $1/3$ as $n\to\infty$ (Conjecture C1), then the weighted average also converges to $1/3$.
\end{proof}

We present computational evidence for Equation \refk{equation:AGGFREQ}. Then we study a refinement using blocks of digits. 
Suppose the string of ternary digits of $2^n$ is cut up into blocks of length $k$ (possibly with a string of fewer than $k$ digits remaining at the end). Let $B_k(n):=\lfloor \len/k\rfloor$ be the number of such blocks, and for a string $s$ of 0s, 1s, and 2s, let $\scount$ be its non-overlapping count, with the aggregate version $\Scount=\sum_{n=1}^N \scount$ and aggregate frequency 
\begin{equation}\label{SFREQDEF}  
    \SFreq = \frac{\Scount}{\sum_{n=1}^N B_k(n)}
\end{equation}
The original conjecture C1 was motivated by the apparent randomness of the digits of $2^n$. If they really behaved as if they were drawn at random, then any string of length $k$ would occur with probability $1/3^k$. 
So it is natural to conjecture that this should be the limit of the aggregate frequencies $\SFreq$. 
After presenting our results about frequencies of strings of length 2 and 3, we end
Section 2 with data about the strongest conjecture C1. 

We then ask what, if anything, we can actually prove about the distribution of digits. 
Well-known results like Benford's Law and Baker's Theorem have implications for these conjectures, but they neither prove nor disprove them. 
We show in Sections 3 and 4, respectively, how to adapt these theorems to our situation, then examine the interplay with the digit frequencies. 

In Section 5, we explore the relationship of these conjectures to the ternary digits of the number
\[  \alpha = \log_3(2) \approx 0.63093\dots
\]
This number plays already a key role in Sections 3 and 4. The concept of a \textit{normal number (to base 3)} is again about the distribution of digits 0, 1, 2 in the ternary representation of that number, in our case
\[  \alpha = \sum_{j=0}^\infty d_j 3^{-j}
\]
A number is called \textit{normal to base 3} if the frequency of any fixed string of length $k$ among the first $r$ length-$k$-blocks of ternary digits of that number approaches $1/3^k$, as $r$ approaches infinity. 
Currently, it is unknown whether $\alpha$ is normal to base 3. 

Despite the obvious connections between the sequence of ternary digits of $\alpha$ and ternary digits of powers of 2, conjectures about the one do not seem to imply conjectures about the other. We can give a heuristic explanation for this non-connection, although it is impossible to prove the absence of any such implication. 

At the end of Section 5, we present computational evidence suggesting that $\alpha$ is indeed normal to base 3. 

In the concluding Section 6, we discuss the relationship of ternary digits of powers of 2 to another famous conjecture -- Selfridge's conjecture about integer complexity.

\section{Computational Evidence for Uniform Distribution}

To investigate the conjectures outlined in Section 1, we performed a computational analysis for powers of two with exponent $n$ in the range $1 \le n \le 10^6$. We gathered data on the distribution of ternary digits and strings of digits, observing whether their frequencies approach uniform distribution as $n$ becomes large. The entire computation required 2 hours, 51 minutes, and 47 seconds of processing time.

\subsection{Methodology}

The calculations were carried out using a custom program written in C, leveraging the GNU Multiple Precision Arithmetic Library (GMP) to handle the integers that would cause an overflow. For each integer $n$ from 1 to $10^6$, the program performed the following steps:

\begin{enumerate}
     \item Compute the value of $2^n$ using GMP's arbitrary-precision integer functions.
     \item Convert the resulting integer into its base-3 string representation $S_n$.
     \item Tally the occurrences of the individual digits '0', '1', and '2' within $S_n$.
     \item For string lengths $k=2$ and $k=3$, parse $S_n$ into non-overlapping blocks of length $k$. Tally the occurrences of each of the $3^k$ possible strings (e.g., for $k=2$, count '00', '01', '02', \dots, '22').
\end{enumerate}

The counts for both individual digits and digit strings were aggregated across all $n$. The total number of digits processed in this computation was 315,465,692,249.

\subsection{Results for Aggregate Digit Frequencies}
\label{sub:aggregate}

We conjectured that the aggregate frequency $\Freq$ of each digit converges towards $1/3$. 
Figure \ref{fig:agg} shows the deviation of these aggregate frequencies from the conjectured limit $1/3$. 

\begin{figure}[h!]
  \centering
  \includegraphics[width=\linewidth]{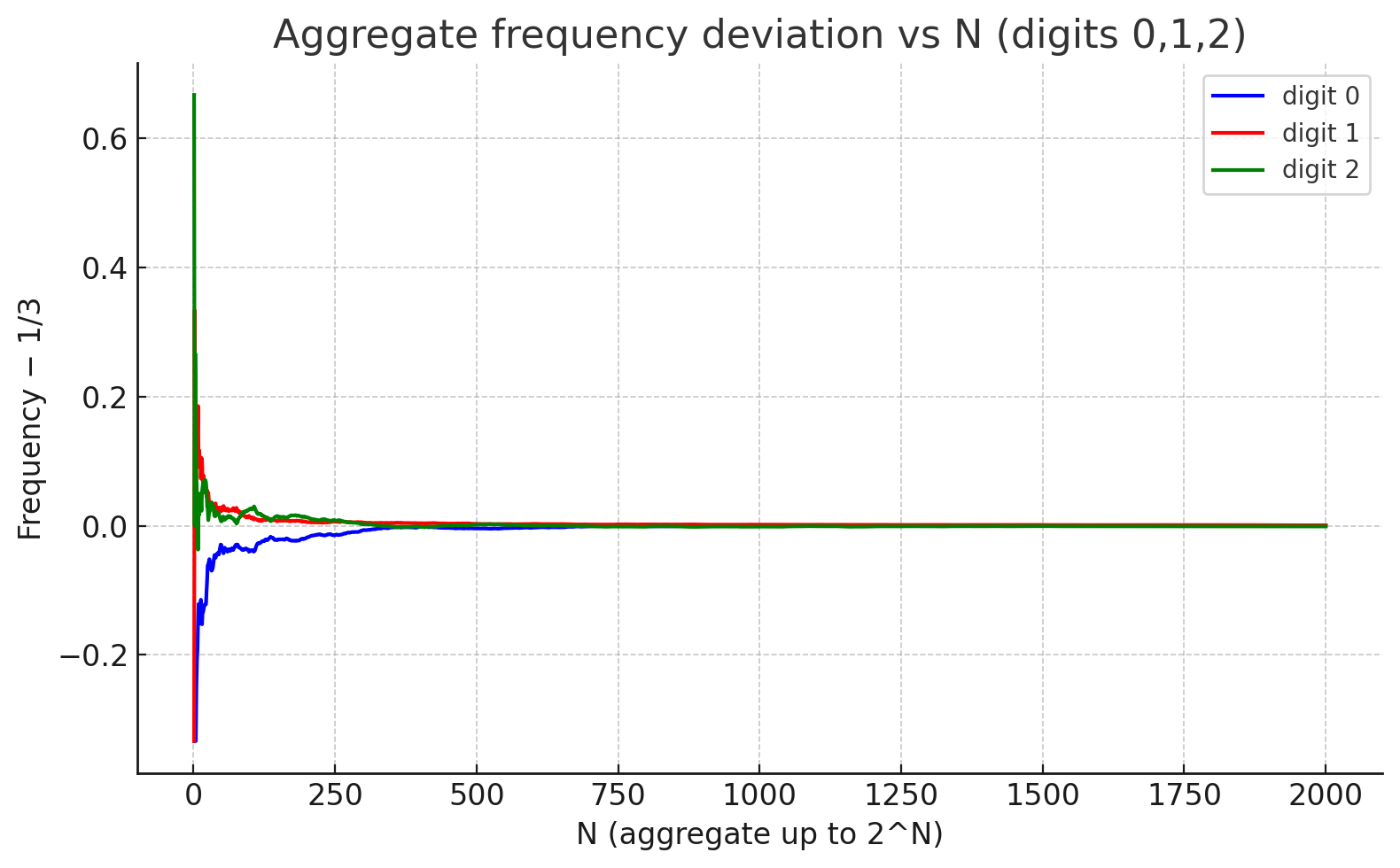} 
  \caption{Deviation of aggregate digit frequencies $\Freq$ from $1/3$
           for exponents $N\le 2000$. Colors: blue $=0$, orange $=1$, green $=2$.}
  \label{fig:agg}
\end{figure}

We chose not to plot the deviation for larger values of $N$, because the plot would look very similar -- three flat lines which are visually indistinguishable from each other for $N>750$. 

But we did compute aggregate frequencies up to $N = 10^6$. The final results are summarized in 
Table \ref{tab:digit_freq_agg}. 

\begin{table}[h!]
\centering
\begin{tabular}{|c|c|}
\hline
\textbf{Digit $d$} & \textbf{Aggregate Frequency $\Freq$ in percent} \\ \hline
\textbf{0} & 33.333041\% \\
\textbf{1} & 33.333576\% \\
\textbf{2} & 33.333382\% \\ \hline
\end{tabular}
\caption{Frequency of Ternary Digits in powers of $2$, aggregated up to exponent $N=10^6$.}
\label{tab:digit_freq_agg}
\end{table}

The percentages come ever closer to the theoretical value of $33.\overline{3}\%$. 

Both Figure \ref{fig:agg} and Table \ref{tab:digit_freq_agg} strongly support the conjecture that each of 0, 1, 2 appear with equal frequency in the limit.

\subsection{Aggregate Digit String Frequencies}

A more refined test of uniform distribution is to examine the frequency of strings of digits as defined in Equation \refk{SFREQDEF}. 
We conjectured that the frequency of any string of length $k$, aggregated up to exponent $N$, would approach $3^{-k}$ as $N$ grows to infinity. 
Our analysis for strings of length $k=2$ and $k=3$ supports this conjecture. 
Table \ref{tab:string_freq_len2} shows the aggregate frequencies for strings of length 2 and $N = 10^6$.

\begin{table}[h!]
\centering
\begin{tabular}{|c|c||c|c|}
\hline
\textbf{String} & \textbf{Frequency} & \textbf{String} & \textbf{Frequency} \\ \hline
'00' & 11.110880\% & '12' & 11.111239\% \\
'01' & 11.111071\% & '20' & 11.111079\% \\
'02' & 11.111008\% & '21' & 11.111290\% \\
'10' & 11.111271\% & '22' & 11.111047\% \\
'11' & 11.111114\% & & \\
\hline
\end{tabular}
\caption{Aggregate Frequency of Strings of Length 2. The expected frequency is $1/9 \approx 11.111\%$.}
\label{tab:string_freq_len2}
\end{table}

The results for strings of length 3 and $N = 10^6$, shown in Table \ref{tab:string_freq_len3}, were similarly close to the expected frequency of $1/27 \approx 3.703704\%$.

\begin{table}[h!]
\centering
\begin{tabular}{|c|c||c|c||c|c|}
\hline
\textbf{String} & \textbf{Frequency} & \textbf{String} & \textbf{Frequency} & \textbf{String} & \textbf{Frequency} \\ \hline
'000' & 3.703532\% & '100' & 3.703663\% & '200' & 3.703700\% \\
'001' & 3.703761\% & '101' & 3.703772\% & '201' & 3.703796\% \\
'002' & 3.703652\% & '102' & 3.703779\% & '202' & 3.703696\% \\
'010' & 3.703561\% & '110' & 3.703813\% & '210' & 3.703712\% \\
'011' & 3.703825\% & '111' & 3.703629\% & '211' & 3.703716\% \\
'012' & 3.703665\% & '112' & 3.703635\% & '212' & 3.703820\% \\
'020' & 3.703620\% & '120' & 3.703779\% & '220' & 3.703632\% \\
'021' & 3.703645\% & '121' & 3.703750\% & '221' & 3.703807\% \\
'022' & 3.703714\% & '122' & 3.703727\% & '222' & 3.703600\% \\
\hline
\end{tabular}
\caption{Aggregate Frequency of Ternary Strings of Length 3. The expected frequency is $1/27 \approx 3.7037\%$.}
\label{tab:string_freq_len3}
\end{table}

The rapid convergence of the frequencies for both individual digits and short strings of digits to their theoretical uniform values provides substantial computational evidence in support of our conjectures. 

\subsection{Variance and Standard Deviation}
\label{sub:variance}

The computational evidence above suggests that the ternary digits of powers of 2 'behave like' a random sequence. But so far, we have only looked at averages, which would correspond to the mean of the theoretical distribution. It is natural to ask about the standard deviation -- \textit{how close} should we expect these aggregate averages to be to the mean? Suppose that the digits in question really were independently drawn from the uniform distribution at random. Since we are looking at behavior for large exponents, we can neglect the fact that the first and last digit can never be zero. Then the frequencies of 0, 1, and 2 in the $\len$ digits of any individual power $2^n$ would all follow a Binomial Distribution $Bi(\len,p)$ with parameters $p=1/3$ and $\len = \lceil\log_3(2^n)\rceil$ independent trials.
Using $\log_3(2^n) = n\alpha$, the expected value of the aggregate count is 
\[  E[X] = \sum_{n=1}^N \len p
\]
Dividing by the total $\Len=\sum_{n=1}^N \len$ gives the theoretical expected value of the aggregate frequency $\bar{X}$ as $1/3$. For the variance, we know that $Bi(\len,p)$ has variance $\len p(1-p)$, aggregating this gives the variance of the aggregate count
\[  V[X] = \Len p(1-p)
\]
Dividing $X$ by the total, we get the theoretical variance of the aggregate frequency
\[  V[\bar{X}] = V\left[  \frac{X}{\Len}\right] = \frac{p(1-p)}{\Len}
\]
With $p = 1/3$ and $\lceil x\rceil\approx x$, we can approximate 
\begin{equation}\label{STDEV}
    V[\bar{X}] \approx \frac{4}{9\alpha N(N+1)}
\end{equation}
Take the square root to get the (approximate) standard deviation $\sigma$. 
Eg with $N = 10^6$, we get $\sigma\approx 8.4 \cdot 10^{-7}$. 

Take the square root to get the (approximate) standard deviation $\sigma$.
For $N=10^6$, using the \emph{exact} totals
\[
\sum_{n\le N}\ell(n)=315{,}465{,}692{,}249
\]
\[
M_2=\sum_{n\le N}\Big\lfloor\frac{\ell(n)}{2}\Big\rfloor=157{,}732{,}596{,}126,\\
\]
\[
M_3=\sum_{n\le N}\Big\lfloor\frac{\ell(n)}{3}\Big\rfloor=105{,}154{,}897{,}417,
\]
we obtain
\[
\sigma_{\text{digit}}
=\sqrt{\frac{p(1-p)}{\sum\ell(n)}}
\approx 8.393\times 10^{-7},\qquad
\]
\[
\sigma_{k=2}
=\sqrt{\frac{(1/9)(8/9)}{M_2}}
\approx 7.913\times 10^{-7},\qquad
\]
\[
\sigma_{k=3}
=\sqrt{\frac{(1/27)(26/27)}{M_3}}
\approx 5.824\times 10^{-7}.
\]
These benchmarks are only slightly smaller than the empirical deviations in
Tables~\ref{tab:digit_freq_agg}, \ref{tab:string_freq_len2}, and \ref{tab:string_freq_len3},
indicating that the aggregate data are consistent with simple i.i.d.\ noise.
Since there are $m=3,9,27$ categories, the largest deviation across categories is
naturally a few $\sigma$ (heuristically on the order of $\sigma\sqrt{2\ln m}$), so
it is not surprising to observe the maximum curve a little farther from the mean than the average one.


\subsection{Non-aggregate digit tallies}\label{ssec:nonagg}
Since the above results of counts aggregated over all exponents
$1\le n\le N$ are so close to uniform distribution, we were tempted to examine the original conjecture C1 -- 
the conjecture that the frequencies of 0, 1, 2 within the ternary digits of \textit{individual powers $2^n$} all have the limit $1/3$. 
For every $n\le2000$ we computed
\[  \freq = \frac{\dcount}{\len}
\]
and plotted the deviation $\freq -1/3$.
Figure \ref{fig:nonagg} visualizes the result.

\begin{figure}[h!]
  \centering
  \includegraphics[width=\linewidth]{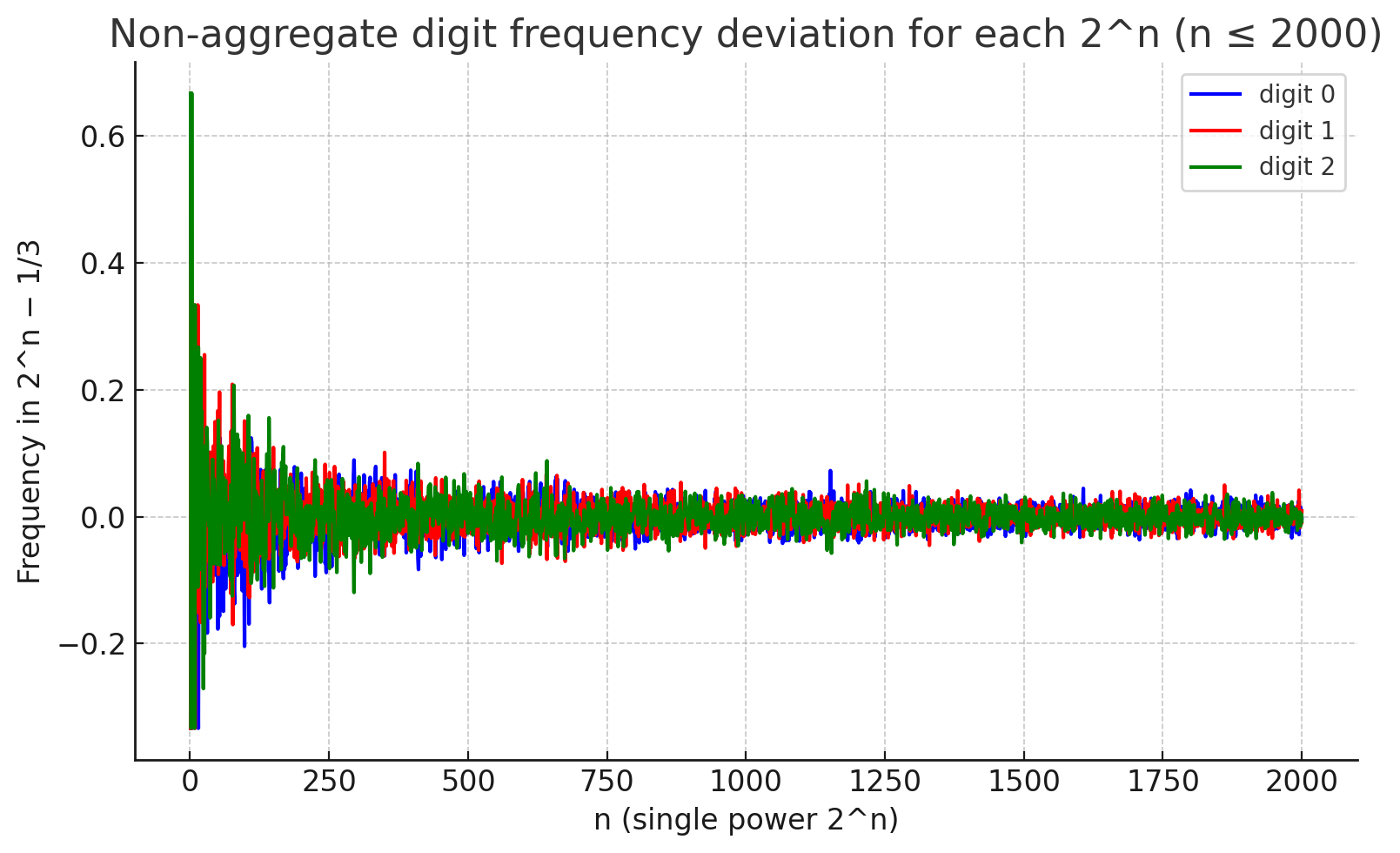}
  \caption{Deviation of the frequency $\freq$ of each digit in
           \emph{individual} powers $2^n$ from the uniform value $1/3$
           for $1\le n\le2000$.  Colors are consistent with
           Figure~\ref{fig:agg}: blue $=0$, orange $=1$, green $=2$.}
  \label{fig:nonagg}
\end{figure}

Two qualitative features stand out: 

\begin{enumerate}
  \item \textbf{Damped oscillations.}
  Early-$n$ fluctuations are on the order of $10^{-2}$ and decrease; by $n\approx 500$
  the deviations typically lie below $5\times 10^{-3}$.
  \item \textbf{No clear digit ordering.}
  All three digits exhibit similar fluctuation patterns. A useful benchmark is the
  i.i.d.\ band $\pm 2\sqrt{p(1-p)/\ell(n)}$ with $p=1/3$, within which most points fall.
\end{enumerate}

To complement the small-$n$ view of Figure~\ref{fig:nonagg}, 
Table~\ref{tab:nonagg_n_1e6} records the \emph{non-aggregate} digit frequencies for the single large exponent $n=10^6$.
The ternary expansion of $2^{10^6}$ has length $L=\lceil 10^6\log_3 2\rceil = 630{,}930$ digits, and each frequency is very close to $1/3$.

\begin{table}[h!]
\centering
\begin{tabular}{|c|r|r|}
\hline
\textbf{Digit} & \textbf{Count} & \textbf{Percentage} \\ \hline
0 & 210{,}367 & 33.342368\% \\
1 & 209{,}942 & 33.275007\% \\
2 & 210{,}621 & 33.382626\% \\ \hline
\textbf{Total} & 630{,}930 & 100.000000\% \\ \hline
\end{tabular}
\caption{Non-aggregate digit counts for the single exponent $n=10^6$.}
\label{tab:nonagg_n_1e6}
\end{table}

Compared to the aggregate results of Section~\ref{sub:aggregate},
Figure~\ref{fig:nonagg} and Table~\ref{tab:nonagg_n_1e6} offer some support of Conjecture~C1: the digit
distribution appears to converge to uniformity already within individual samples.
The random oscillations decay, but much more slowly than in the aggregate situation.
In general, this behavior can be expected — the evidence in favor of uniform distribution is just weaker.  
A computation similar to Section~\ref{sub:variance} gives the standard deviation of the non-aggregate frequencies $\bar{x}$, if the digits were randomly drawn from a uniform distribution, as  
\[
  \sigma(\bar{x}) = \sigma\left(\frac{x}{\lceil n\alpha\rceil}\right) 
    \approx \sqrt{\frac{p(1-p)}{n\alpha}},
\]
which, for $n = 2000$ and $n=10^6$, gives $\sigma\approx 0.013$ and $\sigma\approx 0.0006$, respectively.  
Many of the empirical deviations we see in Figure~\ref{fig:nonagg} and Table \ref{tab:nonagg_n_1e6} are even smaller than this.

A computation similar to Section~\ref{sub:variance} gives the i.i.d.\ benchmark
for a single power $2^n$:
\[
  \sigma\!\left(\frac{c_d(n)}{\ell(n)}\right)
  = \sqrt{\frac{p(1-p)}{\ell(n)}}
  \quad\text{with}\quad p=\tfrac13,\ \ \ell(n)=\lceil n\alpha\rceil.
\]
Numerically,
for $n=2000$ (length $\ell=1262$) we get $\sigma\approx 1.32698\times 10^{-2}$,
and for $n=10^6$ (length $\ell=630{,}930$) we get $\sigma\approx 5.93476\times 10^{-4}$.
The observed deviations in Figure~\ref{fig:nonagg} and Table~\ref{tab:nonagg_n_1e6}
are again comparable to those computed for the simple i.i.d.\ model.

\section{Uniform Distribution and Benford's Law}
\label{section:BENFORD}

A good starting point for actually proving results about digit distribution is to consider the frequency of the leading digit. A well-known empirical observation, first made by Simon Newcomb in 1881 and later popularized by Frank Benford, is that the leading digits in many real-world datasets are not uniformly distributed \cite{Hill1998}. Instead, they tend to follow a logarithmic distribution known as Benford's Law, which gives the probability of a highest decimal digit 
$d=1, 2, \dots, 9$ as
\[ P(d) = \log_{10}\left(d+1\right)  - \log_{10}\left(d\right) 
\]
This law predicts that '1' appears as the leading digit about 30.1\% of the time, while '9' appears less than 5\% of the time.

The theoretical underpinning of Benford's Law is the theory of uniform distribution modulo 1 (see eg \cite{Hlawka} or \cite{Niederreiter}). A sequence of positive numbers $(a_n)$ satisfies Benford's Law if the sequence of their base-10 logarithms, $(\log_{10}(a_n))$, is uniformly distributed modulo 1 \cite{Hill1998}. For the sequence of powers of two, $(2^n)$, we consider the logarithms $\log_{10}(2^n) = n \log_{10}(2)$. Since $\log_{10}(2)$ is an irrational number, the sequence of these logarithms is uniformly distributed modulo 1 by Weyl's Criterion. This proves that the sequence $(2^n)$ obeys Benford's Law in base 10.

We can adapt this reasoning to the base-3 context of our main problem. The leading ternary digit of $2^n$ is determined by the fractional part of $n \log_3(2)$. Specifically, the leading digit is '1' if the fractional part of $n\log_3(2)$ is in  $[0, \log_3(2))$, and '2' if it is in $[\log_3(2), 1)$. 
Since $\alpha = \log_3(2)$ is irrational, 
the sequence $(n\alpha)$ is uniformly distributed modulo 1. This implies a non-uniform distribution for the leading ternary digits '1' and '2'. 
The probabilities are thus $\log_3(2) \approx 63.1\%$ for '1' and $1 - \log_3(2) \approx 36.9\%$ for '2'.

Benford's Law can be adapted to strings of ternary digits as follows.

\begin{theorem}[Benford's Law for ternary digits]
\label{BENFORDTHM}
For any integer $m>0$, the frequency of powers of 2 with a leading string of ternary digits representing $m$ approaches $\log_3(m+1)-\log_3(m)$.
\end{theorem}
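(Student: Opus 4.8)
The plan is to reduce the statement to a uniform-distribution-mod-1 fact, exactly as in the single-digit case discussed just above, but now keeping track of a leading \emph{block} of digits rather than a single one. First I would make precise what ``a leading string of ternary digits representing $m$'' means: if $m$ has $t$ ternary digits, so $3^{t-1}\le m<3^t$, then $2^n$ has leading block equal to (the ternary expansion of) $m$ precisely when
\[
 m\,3^{k-t+1}\le 2^n<(m+1)\,3^{k-t+1}
\]
for some integer $k$ (the index of the leading digit), i.e.\ when the most significant $t$ ternary digits of $2^n$ spell out $m$. Taking $\log_3$ of this double inequality and writing $\{x\}$ for the fractional part of $x$, this condition becomes
\[
 \{n\alpha\}\in[\log_3 m-\log_3 3^{\lfloor n\alpha\rfloor+1-t},\ \log_3(m+1)-\log_3 3^{\lfloor n\alpha\rfloor+1-t}),
\]
which, after subtracting the integer $\lfloor n\alpha\rfloor+1-t$, reduces to
\[
 \{n\alpha\}\in[\{\log_3 m\},\ \{\log_3 m\}+\log_3(m+1)-\log_3 m),
\]
an interval of length exactly $\log_3(m+1)-\log_3 m$ (one should check it does not wrap around the endpoint $1$, using $\log_3(m+1)-\log_3 m\le 1-\{\log_3 m\}$, which holds because $(m+1)/m\le 3^{1-\{\log_3 m\}}$ is equivalent to $m+1\le 3^t$; this is the one routine case-check).

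Next I would invoke Weyl's criterion: since $\alpha=\log_3 2$ is irrational, the sequence $(n\alpha)_{n\ge1}$ is uniformly distributed modulo $1$, so for \emph{any} subinterval $I\subseteq[0,1)$ the density of $\{n:\{n\alpha\}\in I\}$ equals the length $|I|$. Applying this to the interval $I$ identified above gives that the frequency of $n$ for which the leading ternary block of $2^n$ represents $m$ is $\log_3(m+1)-\log_3 m$, which is the assertion.

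I do not expect a serious obstacle here — the argument is a clean generalization of the leading-digit computation already carried out in the paragraph preceding the theorem. The only mildly delicate point is bookkeeping: verifying that the rescaled interval is genuinely an honest subinterval of $[0,1)$ of the stated length and does not straddle $0$, which amounts to the inequality $\{\log_3 m\}+\bigl(\log_3(m+1)-\log_3 m\bigr)\le 1$ noted above. Once that is settled, Weyl's criterion (or, equivalently, the three-distance/equidistribution theorem for irrational rotations) finishes the proof immediately.
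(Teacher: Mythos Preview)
Your proposal is correct and follows essentially the same route as the paper: characterize ``leading block equals $m$'' by the inequality $3^k m\le 2^n<3^k(m+1)$, take $\log_3$ to turn this into the condition that $n\alpha$ lies in an interval of length $\log_3(m+1)-\log_3 m$, and then invoke equidistribution of $(n\alpha)$ modulo~$1$. The paper's write-up is terser (it simply notes the interval has length $<1$ and appeals to equidistribution), whereas you spell out the reduction to $\{n\alpha\}\in[\{\log_3 m\},\{\log_3 m\}+\log_3(m+1)-\log_3 m)$ and explicitly check via $m+1\le 3^t$ that this interval does not wrap past $1$; this extra bookkeeping is a matter of detail, not a different method.
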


\begin{proof}
The leading digits of any number $A$ form a string which is the ternary representation of $m$ if and only if 
\begin{equation}\label{BENFORD1} A = 3^k m + r
\end{equation}
with $0\le r< 3^k$. 
So we can restate Theorem \ref{BENFORDTHM} as 
\[  \lim_{N\to \infty} \frac{1}{N} \#\{n\le N:\ 2^n = 3^km + r,\ 0 \le r<3^k\} = \log_3(m+1)-\log_3(m)
\]
Equation \refk{BENFORD1}, with the conditions on $r$, can then be further rewritten by taking logarithms with base 3,
\begin{equation}\label{BENFORD2} 
    k + \log_3(m) = \log_3(3^k m) \le \log_3(A) < \log_3(3^k(m+1)) = k + \log_3(m+1)
\end{equation}
We see that the leading digits of $A$ agree with $m$ if and only if $\log_3(A)$ falls into an interval of length $\log_3(m+1)-\log_3(m)$. Note that this length is always less than 1. The uniform distribution of $\log_3(2^n)$ modulo 1 then concludes the proof (see eg \cite{Hlawka} or \cite{Niederreiter}, but any textbook on uniform distribution will do -- they usually cover the case of $n\alpha$ modulo 1 as the very first example). 
\end{proof}

Here is an interesting consequence of Benford's Law: the average count of $d$ in the leading string of digits of $2^n$, $n=1,\dots, N$ approaches a limit as $N$ goes to infinity. 

\newcommand{\hcd}[1]{\gamma_d(#1)}
\newcommand{\HFreq}{F_{d,H}(N)}

To state the following theorem, we need notation for the \textit{count of $d=0,1,2$ in the leading digits} of arbitrary integers $A$, not just powers of 2. Let us write 
\begin{equation} \label{GAMMADEF}
    \hcd{A,H} = \text{number of $d$'s in the highest $(H+1)$ ternary digits of $A$}
\end{equation}
If $A$ has fewer than $H+1$ digits, let $\hcd{A,H}$ be the count of all digits equal to $d$. We define the \textit{average count in the highest $(H+1)$ digits of $A=2^n$} as 
\begin{equation}\label{HFREQDEF}
    \HFreq = \frac{1}{N} \sum_{n=1}^N \hcd{2^n,H}
\end{equation}

\begin{theorem}[Average count in leading digits]
\label{AVERAGEFREQTHM}
The average count of $d$ in the $(H+1)$ leading digits of $2^n$ approaches a limit as $N$ grows to infinity, 
\[  
\lim_{N\to\infty} \HFreq
= \sum_{3^H\le m<3^{H+1}} \hcd{m,H} [\log_3(m+1) - \log_3(m)] =: \limHFreq
\]
\end{theorem}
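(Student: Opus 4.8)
The plan is to reduce Theorem~\ref{AVERAGEFREQTHM} to Theorem~\ref{BENFORDTHM} by partitioning the powers $2^n$ according to which string of length $H+1$ appears in their leading ternary digits. For each integer $m$ with $3^H\le m<3^{H+1}$, the leading $(H+1)$ digits of $2^n$ spell out the ternary representation of $m$ precisely when $2^n=3^k m+r$ with $0\le r<3^k$, which by Theorem~\ref{BENFORDTHM} happens for a set of exponents $n$ of density $\log_3(m+1)-\log_3(m)$. On that set, $\gamma_d(2^n,H)=\gamma_d(m,H)$ is constant. The only exponents not covered by some such $m$ are those $n$ with $\ell(n)\le H$, i.e.\ $\lceil n\alpha\rceil\le H$, of which there are only finitely many (at most $O(H/\alpha)$); these contribute $o(1)$ to the average $F_{d,H}(N)$ and can be discarded.

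Concretely, I would first fix $H$ and write, for $N$ large,
\[
F_{d,H}(N)=\frac1N\sum_{n=1}^N\gamma_d(2^n,H)
=\frac1N\sum_{3^H\le m<3^{H+1}}\gamma_d(m,H)\cdot\#\{n\le N:\ 2^n\equiv m\text{ in leading }H{+}1\text{ digits}\}+\frac{E_N}{N},
\]
where $E_N$ is the bounded contribution of the finitely many $n$ with too few digits. Then I would divide the displayed count by $N$, let $N\to\infty$, and invoke Theorem~\ref{BENFORDTHM} termwise: since the sum over $m$ is finite (there are $2\cdot3^H$ terms), the limit passes inside the sum, giving $\sum_m \gamma_d(m,H)\,[\log_3(m+1)-\log_3(m)]$, and $E_N/N\to0$. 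This yields exactly $L_{d,H}$.

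A small bookkeeping point worth checking: the intervals $[\log_3 m,\log_3(m+1))$ for consecutive $m$ in $[3^H,3^{H+1})$ tile $[H,H+1)$, hence their fractional parts tile $[0,1)$; together with the definition of $\gamma_d$ for integers with exactly $H+1$ digits, this guarantees that every $n$ with $\ell(n)\ge H+1$ is counted exactly once and with the correct weight, so there is no double counting and no missing mass. One should also confirm that when $\ell(n)=H+1$ exactly, the leading-$(H+1)$-digit string is the full ternary expansion of $2^n$, consistent with both the definition of $\gamma_d(2^n,H)$ and the event $2^n=3^k m+r$ with $k=0$.

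The main obstacle is purely that Theorem~\ref{BENFORDTHM} gives convergence of each individual density but not a uniform rate, so one cannot naively let $H$ grow with $N$; fortunately the statement fixes $H$, so for each fixed $H$ the finite sum over $m$ makes the interchange of limit and sum trivial, and the argument is essentially a finite linear combination of instances of Benford's Law. Thus there is no real analytic difficulty here — the proof is a clean application of Theorem~\ref{BENFORDTHM} plus the observation that short powers are negligible in the Cesàro average.
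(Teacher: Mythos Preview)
Your proposal is correct and follows essentially the same approach as the paper: partition the exponents according to which $(H+1)$-digit leading block $m\in[3^H,3^{H+1})$ occurs, apply Theorem~\ref{BENFORDTHM} to each of the finitely many $m$, and discard the finitely many small $n$ with fewer than $H+1$ digits as a bounded (hence negligible) error. Your write-up is in fact more explicit than the paper's two-sentence sketch, spelling out the finite-sum/limit interchange and the tiling of $[0,1)$ that the paper leaves implicit.
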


\begin{proof}
This follows directly from observing that strings of $H+1$ ternary digits, with nonzero leading digit, correspond exactly to numbers $m$ between $3^H$ and $3^{H+1}$ (the latter is excluded), and then applying Theorem \ref{BENFORDTHM}. 
Note that for small $n$, specifically those $n$ with $2^n < 3^{H}$, we do not have enough digits in $2^n$ to possibly match $m$. But this part of the aggregate count is bounded independently of $N$ and can therefore be neglected (the reader may have noticed the same issue already in Theorem \ref{BENFORDTHM}). 
\end{proof}

Dividing the average count by $H+1$ gives the \textit{average frequency of $d$ in the entire set of leading digits of $2^n$, $n=1,\dots, N$} (again neglecting small $n<H\alpha$). 
Theorem \ref{UNIFDISTRIBTHM} is what we would expect -- uniform distribution in the average frequencies in the leading string of digits, as the length $H+1$ of that string goes to infinity. 

\begin{theorem}[Uniform distribution of frequency in leading digits]
\label{UNIFDISTRIBTHM}
For $d=0,1,2$, 
\[  \lim_{H\to \infty} \frac{\limHFreq}{H+1} = \frac{1}{3}
\]
\end{theorem}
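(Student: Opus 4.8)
The plan is to show that $\limHFreq/(H+1)\to 1/3$ by recognizing the sum defining $\limHFreq$ as a Riemann–Stieltjes-type sum against the logarithm, and exploiting the symmetry of the three digit values. First I would rewrite
\[
\limHFreq=\sum_{3^H\le m<3^{H+1}}\hcd{m,H}\,\bigl[\log_3(m+1)-\log_3 m\bigr]
\]
and observe that $\sum_{d=0}^{2}\hcd{m,H}=H+1$ for every $m$ in the range (each such $m$ has exactly $H+1$ ternary digits), so that $\sum_{d=0}^{2}\limHFreq$ telescopes to $(H+1)\bigl[\log_3(3^{H+1})-\log_3(3^{H})\bigr]=H+1$. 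Hence $\sum_d \limHFreq/(H+1)=1$ exactly, and it suffices to prove that the three limits $\lim_{H\to\infty}\limHFreq/(H+1)$ exist and are equal; they must then all equal $1/3$.

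The equality of the three limits is where the digit-symmetry of base $3$ enters. For a fixed $d$, the weight $\log_3(m+1)-\log_3 m = \log_3(1+1/m)$ is, for $m$ in $[3^H,3^{H+1})$, within a factor $1+O(3^{-H})$ of the ``flat'' weight $\frac{1}{m\ln 3}$, and more usefully it is close to being constant on dyadic-type sub-blocks. The clean way to proceed: compare $\limHFreq$ to the uniform-weight average $U_{d,H}:=\frac{1}{2\cdot 3^{H}}\sum_{3^H\le m<3^{H+1}}\hcd{m,H}$, the mean number of $d$'s among all length-$(H+1)$ ternary strings with nonzero leading digit. A direct count gives $U_{d,H}$ exactly — the leading digit contributes $\frac12$ to the count of $1$ and of $2$ and $0$ to that of $0$, while each of the remaining $H$ positions contributes $\frac13$ — so $U_{d,H}=\frac{H}{3}+\varepsilon_d$ with $\varepsilon_0=0$, $\varepsilon_1=\varepsilon_2=\frac12$, and in all cases $U_{d,H}/(H+1)\to 1/3$. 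It then remains to show $\limHFreq - U_{d,H}=o(H)$.

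The main obstacle is exactly this last estimate: the Benford weights are not uniform, so $\limHFreq$ could a priori be biased toward the digits appearing in the \emph{small} values of $m$ (where $\log_3(1+1/m)$ is largest) — i.e. toward leading digit $1$. To control this I would split $[3^H,3^{H+1})$ into blocks $I_j=[3^H\cdot t_j, 3^H\cdot t_{j+1})$ on which $\log_3(1+1/m)$ varies little, and on each block compare the $\hcd{m,H}$-sum to $|I_j|$ times the average digit count over that block; since each block of consecutive integers is very nearly balanced among the three digits in every non-leading position (the discrepancy in digit counts over any interval of length $\ell$ in positions below the top is $O(\log \ell)$ per position, hence $O(H\log(3^H))$ overall is far too crude — instead one uses that over a \emph{full} block $[3^H t_j,3^H t_{j+1})$ aligned to a power of $3$ the counts are exactly balanced). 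Summing the block estimates and using $\sum_j |I_j|\log_3(1+1/(3^H t_j)) \asymp 1$ yields $\limHFreq=\frac{H+1}{3}+O(1)$, whence the theorem. The delicate point is choosing the block decomposition so that the per-block digit-count error and the per-block weight-variation error are simultaneously small; I expect $O(\sqrt H)$-sized blocks, or a dyadic decomposition in $t$, to do the job, but verifying that the errors telescope rather than accumulate linearly in $H$ is the crux.
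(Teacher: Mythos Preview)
Your setup is sound---the identity $\sum_{d=0}^{2}\limHFreq = H+1$ is correct and a nice observation---but the proposal does not actually close. You reduce everything to showing $\limHFreq - U_{d,H} = o(H)$ and then leave that step open, writing that choosing the right block decomposition so the errors do not accumulate ``is the crux.'' As stated, this is where the argument stops rather than finishes: your blocks aligned to powers of $3$ balance the \emph{low} digit positions exactly, but the \emph{high} positions (those determined by the block index $a$ in $[a\cdot 3^k,(a+1)\cdot 3^k)$) are fixed on each block and carry all the dependence on $d$, so you still have to control a weighted sum of $\hcd{a,H-k}$ over the block indices---essentially the same problem one level up. Without a further idea this recurses rather than terminates.

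The paper sidesteps the whole block-decomposition issue with a one-line recursion. Writing each $m\in[3^{H+1},3^{H+2})$ as $m=3m'+e$ with $m'\in[3^{H},3^{H+1})$ and $e\in\{0,1,2\}$, one has $\hcd{3m'+e,H+1}=\hcd{m',H}+[e=d]$ and the three weights $\log_3(3m'+e+1)-\log_3(3m'+e)$ telescope over $e$ to $\log_3(m'+1)-\log_3(m')$. These two identities give
\[
L_{d,H+1}-L_{d,H}=\sum_{m'=3^{H}}^{3^{H+1}-1}\bigl[\log_3(3m'+d+1)-\log_3(3m'+d)\bigr],
\]
and the right-hand side is $\tfrac{1}{3}+O(3^{-H})$ by the obvious integral comparison. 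Summing in $H$ yields $\limHFreq = H/3 + O(1)$, which is stronger than the $o(H)$ you were aiming for and requires no block decomposition at all. The moral: peel off the \emph{last} ternary digit rather than trying to average over large blocks of $m$; the recursion in $H$ does the averaging for you.
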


\begin{proof}
We apply Theorem \ref{AVERAGEFREQTHM}, 
to $L_{d,H+1}$, 
using $m = 3m' + d$ with $3^H\le m' < 3^{H+1}$.
We will need two key identities, each easy to verify, 
\begin{eqnarray*} 
 \hcd{3m'+e,H+1} &=& \left\{
    \begin{array}{ll}
    \hcd{m',H} + 1 & \text{for $e=d$}\\
    \hcd{m',H} & \text{for $e\neq d$}
    \end{array}\right. \\
    \sum_{e=0}^2 [\log_3(3m' + e + 1) - \log_3(3m'+e)]
    &=& \log_3(3m'+3) - \log_3(3m') \\
    &=& \log_3(m'+1) - \log_3(m')
\end{eqnarray*}
With these two ingredients, 
\begin{eqnarray}
    L_{d,H+1} &=& \sum_{m'=3^H}^{3^{H+1}-1} (\hcd{m',H}+1)[\log_3(3m'+d+1)-\log_3(3m'+d)]\nonumber\\
    && + \sum_{m'=3^H}^{3^{H+1}-1}\sum_{e=0, e\neq d}^2 \hcd{m',H}[\log_3(3m'+e+1)-\log_3(3m'+e)]\nonumber\\
     &=& L_{d,H} + \sum_{m'=3^H}^{3^{H+1}-1} \log_3(3m'+d+1)-\log_3(3m'+d) 
  \label{UNIFDISTR2}
\end{eqnarray}
Next, we apply the standard linear approximation
\begin{equation}
\label{UNIFDISTR2}
    \log_3(1+x) = \frac{x}{\ln(3)} + O(x^2)
\end{equation}
valid for all $x>0$, with $x = 1/(3m'+d)$. Replacing $1/(3m'+d)$ by $1/(3m')$ everywhere also makes only a negligible difference, even when summing over all $m'$, as $H$ grows to infinity. Finally, we compare the rewritten sum to an integral which also makes just a negligible difference,
\begin{eqnarray}
\label{UNIFDISTR3}
L_{d,H+1} - L_{d,H} &=& \sum_{m'=3^{H}}^{3^{H+1}-1} \left(\frac{1}{3\ln(3)m'}\right) + O\left(3^{-H}\right)\nonumber\\
    &=& \int_{3^H}^{3^{H+1}}    \frac{1}{3\ln(3) x}\, dx + O\left(3^{-H}\right)\nonumber\\
    &=& \frac{1}{3} + O\left(3^{-H}\right)
\end{eqnarray}
This shows 
\begin{equation}
    \label{UNIFDISTR4}
    \limHFreq = \frac{H}{3} + O(1)
\end{equation}
from which Theorem \ref{UNIFDISTRIBTHM} follows immediately.
\end{proof}

\begin{remark}
\label{UNIFDISTRIBREM}
\begin{enumerate}
\item[a)]
Since the first digit is never zero, and 1 has a higher frequency than 2, there is a certain bias towards 1 and away from 0. 
\item[b)]
Neither Theorem \ref{AVERAGEFREQTHM} nor Theorem \ref{UNIFDISTRIBTHM} say anything about a limit of the \textit{non-aggregate} relative frequency of a digit in the digits of a single power $2^n$ by itself. 
The following section contains the results we know about these questions. 
\item[c)]
For any string $m=1000000\dots 0_3$, the relative frequency of powers $2^n$ with this front end is positive, so such powers must exist for any length of the string of 0s. If we consider only leading strings of fixed length of $2^n$, then the analogue of Erd\"os' conjecture C4 would be false. 
\end{enumerate}
\end{remark}

\section{A Special Case of Baker's Theorem and its Implications}

Baker's Theorem is incredibly general, and some versions give more details about the constants involved. See eg \cite{Baker1975} for the general theorem, and the blog post \cite{Tao2011_1} for the application to our situation. All we need here is this special case. 

\begin{theorem}[Baker 1975 -- very special case]
    \label{Baker75}
    Suppose $a,b$ are algebraic and positive, and $n\ln(a) - m\ln(b) \neq 0$ for all pairs of integers $(m,n)$ except $(0,0)$. 
    Then there exist constants $C, D>0$ such that for all integers $m,n>0$
\[  |n\ln(a) - m\ln(b)| \ge \frac{C}{m^D}
\]
\end{theorem}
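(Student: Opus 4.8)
The plan is to deduce this special case directly from the full strength of Baker's theorem on linear forms in logarithms (\cite{Baker1975}), applied to
\[
  \Lambda := n\ln a - m\ln b,
\]
viewed as a linear form in the two logarithms $\ln a$, $\ln b$ of the nonzero algebraic numbers $a$ and $b$. The first step is to unpack the hypothesis. The condition that $n\ln a - m\ln b\neq 0$ for every integer pair $(m,n)\neq(0,0)$ is exactly the statement that $\ln a$ and $\ln b$ are linearly independent over $\mathbb{Q}$; in particular (take $(m,n)=(0,1)$ and then $(1,0)$) neither $a$ nor $b$ equals $1$, so both logarithms are nonzero, and the ratio $\theta := \ln b/\ln a$ is a well-defined irrational number. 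This is precisely the non-vanishing needed to invoke Baker's theorem, which then provides effectively computable constants $C_0 = C_0(a,b)>0$ and $\kappa = \kappa(a,b)>0$, depending only on the degrees and heights of $a,b$ (and on there being two logarithms), such that
\[
  |\Lambda| \;\ge\; C_0\, B^{-\kappa}, \qquad B := \max(m,n),
\]
whenever $\Lambda\neq 0$.

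The only gap between this and the stated inequality is cosmetic: Baker's bound is phrased in terms of $B=\max(m,n)$, while we want a bound in terms of $m$ alone. I would close it with a comparability argument, splitting into two cases. If $|\Lambda|\ge 1$ the desired inequality is trivial for any $D>0$ as soon as $C\le 1$, since $m\ge 1$. If instead $|\Lambda|<1$, then $|n - \theta m| = |\Lambda|/|\ln a| < 1/|\ln a|$, whence $n \le |\theta|\,m + 1/|\ln a| \le c_1 m$ for a constant $c_1 = c_1(a,b)$ valid for all $m\ge1$ (the additive term is absorbed using $m\ge 1$), and therefore $B = \max(m,n)\le c_1 m$. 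Implicitly this case also forces $\ln a$ and $\ln b$ to have the same sign: if they had opposite signs, then $|\Lambda| = n|\ln a| + m|\ln b|\ge |\ln a|+|\ln b|$, which puts us back in the trivial case.

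Combining the two ingredients, in the nontrivial case
\[
  |n\ln a - m\ln b| \;\ge\; C_0\,B^{-\kappa} \;\ge\; C_0\,(c_1 m)^{-\kappa} \;=\; C_0\,c_1^{-\kappa}\, m^{-\kappa},
\]
so the theorem holds with $D := \kappa$ and $C := \min\bigl(1,\, C_0 c_1^{-\kappa}\bigr)$; if the version of Baker's theorem being cited carries a restriction such as $B\ge B_0$, the finitely many excluded pairs each satisfy $|\Lambda|>0$ by hypothesis and are accommodated by shrinking $C$ once more. I expect the \emph{main obstacle} to be essentially nonexistent on our side: all the genuine difficulty, namely producing a lower bound for $|\Lambda|$ that is only polynomially small in the coefficient size, is exactly the transcendence-theoretic content we are importing as a black box from \cite{Baker1975}. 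The one point that does require a moment's care is the observation in the previous paragraph that the same hypothesis which makes Baker's theorem applicable (that $\Lambda$ never vanishes) simultaneously forces $n$ and $m$ to have the same order of magnitude, which is what legitimately lets us replace $\max(m,n)$ by $m$ in the final bound.
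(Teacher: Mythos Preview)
Your derivation is correct, and in fact you have supplied more than the paper does: the paper does not prove this theorem at all. It simply quotes this special case as a black box, pointing to \cite{Baker1975} for the general statement and to Tao's blog post \cite{Tao2011_1} for the reduction to the present setting, and then immediately specializes to $a=2$, $b=3$ to derive Corollary~\ref{BAKERCOROLLARY}.

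What you have written is exactly the missing bridge: you invoke the general Baker bound $|\Lambda|\ge C_0\,B^{-\kappa}$ with $B=\max(m,n)$, and then observe that the hypothesis forcing $\Lambda\neq 0$ also forces $n$ and $m$ to be comparable whenever $|\Lambda|$ is small, so that $B$ may be replaced by $m$ at the cost of adjusting the constant. The case split (trivial when $|\Lambda|\ge 1$, comparability when $|\Lambda|<1$) and the remark about the signs of $\ln a$ and $\ln b$ are both handled cleanly. One cosmetic point: to ensure $B=\max(m,n)\le c_1 m$ you should take $c_1\ge 1$, e.g.\ $c_1=\max\bigl(1,\ |\theta|+1/|\ln a|\bigr)$, so that the case $B=m$ is covered as well; this does not affect the argument.
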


Choose $a = 2$, $b = 3$ in Theorem \ref{Baker75}, and write the left-hand side as a single logarithm. Then exponentiate both sides and use the simple fact $e^x > 1 + x$ to get 
\[  \frac{2^n}{3^m} \ge 1 + \frac{C}{m^D}
\]
This gives 
\begin{corollary}[Consequence of Baker's Theorem]
\label{BAKERCOROLLARY}
        There exist constants \(C, D\) such that for all $m,n>0$ with $3^m < 2^n$,
        \[
        2^n - 3^m \geq C\cdot 3^m \cdot m^{-D}
        \]
\end{corollary}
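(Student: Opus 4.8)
The plan is to deduce the Corollary directly from Theorem~\ref{Baker75} applied with $a=2$ and $b=3$, essentially formalizing the short computation sketched just before the statement. First I would verify the hypothesis of Theorem~\ref{Baker75}: one needs $n\ln 2 - m\ln 3 \neq 0$ for every pair of integers $(m,n)\neq(0,0)$. This is immediate, since $n\ln 2 = m\ln 3$ would force $2^n = 3^m$, impossible for $(m,n)\neq(0,0)$ by unique factorization. Both $2$ and $3$ are algebraic and positive, so Theorem~\ref{Baker75} applies and produces constants $C,D>0$ with $|n\ln 2 - m\ln 3| \ge C\, m^{-D}$ for all integers $m,n>0$.

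Next I would specialize to the regime of interest, $3^m < 2^n$. There $\delta := n\ln 2 - m\ln 3 = \ln(2^n/3^m)$ is strictly positive, so the Baker bound reads $\delta \ge C\, m^{-D}$ with $\delta>0$. Exponentiating and using $e^x \ge 1+x$ (valid for all real $x$, applied at $x=\delta$) gives
\[
  \frac{2^n}{3^m} = e^{\delta} \ge 1 + \delta \ge 1 + \frac{C}{m^D}.
\]
Multiplying through by $3^m$ and rearranging yields $2^n - 3^m \ge C\cdot 3^m\cdot m^{-D}$, the claimed inequality, with the same $C$ and $D$ produced by Theorem~\ref{Baker75}.

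There is essentially no hard step: all the content sits inside Theorem~\ref{Baker75}, and the Corollary is just a convenient reformulation. The only points needing a word of care are (i) confirming the non-degeneracy hypothesis of Baker's theorem, which is elementary, and (ii) keeping track of the sign of $\delta$ — the estimate $e^\delta \ge 1+\delta$ is useful in the direction we want precisely because the restriction $3^m < 2^n$ guarantees $\delta > 0$. I would also note in passing that when $2^n$ greatly exceeds $3^m$ the inequality is very weak (the true gap is then of order $2^n \gg 3^m$), so the substance of the bound lies in the case where $3^m$ approaches $2^n$ from below — exactly where Baker's lower bound on $|\delta|$ is doing real work — and that $D$ is the (unoptimized) exponent inherited from Baker's theorem.
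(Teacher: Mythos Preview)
Your argument is correct and follows exactly the paper's approach: apply Theorem~\ref{Baker75} with $a=2$, $b=3$, then exponentiate the resulting lower bound on $n\ln 2 - m\ln 3$ and use $e^x \ge 1+x$ to convert it into the stated inequality for $2^n - 3^m$. Your version is simply more explicit about checking the non-degeneracy hypothesis and about the role of the sign of $\delta$, but the substance is identical.
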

Let us consider this in terms of the ternary digits of $2^n$. If the leading digit is $1$, then this means there can be at most a constant times $\ln(m)$ zeros after the leading digit. But if the leading digit is $2$, followed by a long string of zeros, then we can consider the digits of $2^{n-1}$. The leading digit there would be $1$, followed by a string of zeros of the same or greater length -- hence the number of zeros after the leading digit is $O(\ln(m))$ in all cases.

Corollary \ref{BAKERCOROLLARY} does not contradict part c) of Remark \ref{UNIFDISTRIBREM}, but it narrows down the possibilities for the strings of digits after the leading digit of $2^n$.

\section{Ternary Digits of the Logarithm of 2 to Base 3}

The preceding sections have focused on the properties of the sequence of integers $(2^n)$. We now shift our focus to the properties of a single real number, $\alpha = \log_3(2)$, which already played a key role in our primary investigation. While the distribution of digits in the sequence $(2^n)$ and the distribution of digits in the single number $\alpha$ are distinct problems, they explore a similar theme of apparent randomness in deterministic systems. And of course, the problems are connected: the ternary digits of $\alpha$ contain all the information needed to determine all ternary digits of $2^n$, for every $n$ (see our discussion of Benford's Law in Section \ref{section:BENFORD}). The relationship is particularly straightforward if the exponent $n$ is a power of $3$, say $n = 3^d$. Then 
\[  \log_3(2^n) = n\alpha = 3^d\alpha
\]
In this case, the ternary digits of $\log_3(2^n)$ are simply the same as the leading ternary digits of $\alpha$, shifted $d$ spaces to the left. But even though we know that the sequence $(n\alpha)$ is uniformly distributed modulo 1, this is neither a sufficient nor a necessary condition for the subsequence $(3^d\alpha)$ having this property. 

It still seems natural to investigate the distribution of digits of $\alpha$. 
A central concept for discussing such a digit distribution is that of normality.
\begin{definition}
A real number $x$ is said to be \textit{normal in base b} if, for every positive integer $k$, every possible block of $k$ digits appears in the base-$b$ expansion of $x$ with a limiting frequency of $b^{-k}$ \cite{BaileyCrandall2001}, \cite{Chamberland2003}. A number is \textit{absolutely normal} if it is normal in every integer base $b \ge 2$.
\end{definition}

It is a famous open problem whether $\alpha = \log_3(2)$ is normal to any base. It is widely conjectured that all irrational algebraic numbers and most transcendental constants of interest are absolutely normal, but not a single one has been proven to be normal in even one base \cite{BaileyCrandall2001}.

The modern approach to this problem, pioneered by Bailey and Crandall, connects the normality of certain constants to the behavior of specific chaotic dynamical systems. Their work suggests that constants like $\pi$ and $\ln(2)$ are normal to certain bases, contingent on a powerful conjecture they term 'Hypothesis A'. However, this framework is not known to apply to $\log_3(2)$, as no suitable series representation for it has been discovered. Therefore, its normality remains an open question.

In the spirit of our primary investigation, we conducted a parallel computational analysis of the first $1,000,000$ ternary digits of $\log_3(2)$ to test the conjecture that it is normal to base 3. 


Table \ref{tab:log_digit_freq} shows the frequencies of the individual digits from our computation. 

\begin{table}[h!]
\centering
\begin{tabular}{|c|c|c|}
\hline
\textbf{Digit} & \textbf{Count (out of $10^6$)} & \textbf{Percentage} \\ \hline
\textbf{0} & 334,147 & 33.4147\% \\
\textbf{1} & 332,209 & 33.2209\% \\
\textbf{2} & 333,644 & 33.3644\% \\ \hline
\end{tabular}
\caption{Frequency of the first $1,000,000$ ternary digits of $\log_3(2)$.}
\label{tab:log_digit_freq}
\end{table}

The frequencies are close to the expected value of $33.\overline{3}\%$, although not as close as the values we saw in our investigation of ternary digits of $2^n$.

To test for higher-order uniform distribution, we analyzed the frequencies of strings of length 2. The results, shown in Table \ref{tab:log_string_freq_len2}, are again close to the theoretical value of $1/9 \approx 11.111\%$.

\begin{table}[h!]
\centering
\begin{tabular}{|c|c||c|c|}
\hline
\textbf{String} & \textbf{Frequency} & \textbf{String} & \textbf{Frequency} \\ \hline
'00' & 11.1758\% & '12' & 11.0796\% \\
'01' & 11.1590\% & '20' & 11.0802\% \\
'02' & 11.1472\% & '21' & 11.0794\% \\
'10' & 11.0914\% & '22' & 11.1712\% \\
'11' & 11.0162\% & & \\
\hline
\end{tabular}
\caption{Frequency of 2-digit strings from the first $1,000,000$ ternary digits of $\log_3(2)$.}
\label{tab:log_string_freq_len2}
\end{table}

Finally, an analysis of 3-digit strings also showed strong convergence to the expected frequency of $1/27 \approx 3.7037\%$, further supporting the conjecture that $\log_3(2)$ is normal to base 3. 
It is intriguing that both the sequence of digit distributions for $(2^n)$ and the digit distribution for the single number $\log_3(2)$ show such strong computational evidence of uniformity, even if a precise theoretical bridge remains to be built.

\section*{Conclusion}

Extensive computations show that the ternary digits of $2^n$ exhibit striking uniformity: aggregate frequencies for digits and short blocks converge rapidly to the expected values $1/3$ and $3^{-k}$, with deviations not much bigger than under naive independence. Parallel results for the ternary expansion of $\log_3 2$ display similar behavior. These findings strongly support the conjectured uniform distribution. 

Mathematical proofs for all of these remain elusive, we can only prove results for the aggregate distribution of digits in the 'front end' of powers of 2. Baker's Theorem gives an upper bound for runs of 0s after the leading digit which is the only pertinent result we know that is valid for individual powers of 2. The frequency of 0s is particularly interesting, though, because of a connection to Selfridge's conjecture, which is still open. 
This conjecture is briefly stated as follows. Define for every integer $A$ the \textit{integer complexity} $||A||$ as the minimal number of 1s which allows to express $A$ using addition and multiplication. Then Selfridge conjectured $||2^n|| = 2n$ (obviously, $2^n = (1+1)(1+1)\dots(1+1)$, so $||2^n||\le 2n$). 

For details, we refer the reader to Altman/Zelinsky \cite{Zelinsky}. Let us just conclude with the remark from that paper that a counterexample to Selfridge's conjecture would need to involve a power of 2 with 'many zeros' in its ternary digits. 

\vskip 20pt
\noindent {\bf Acknowledgements.} 
The authors are grateful to Marc Chamberland, Christy Hazel (Grinnell), and Jonathan DH Smith (Iowa State University) for valuable comments and suggestions, as well as for invitations to present in their respective seminars. 
The Mathematics Department of Grinnell College provided support in the form of a summer internship, and the Mathematics Department at Iowa State provided travel support. 


\end{document}